\DeclareMathOperator*{\argmin}{arg\,min}
\DeclareMathOperator*{\poa}{PoA}
\newtheoremstyle{teo}
  {3pt} 
  {3pt} 
  {\itshape} 
  {} 
  {\bfseries} 
  {.} 
  {.5em} 
  {} 
\theoremstyle{teo}
\newtheorem{lem}{Lemma}
\newtheorem{prop}{Proposition}
\newtheorem{corol}{Corollary}
\newtheorem{thm}{Theorem}
\newtheoremstyle{defi}
  {3pt} 
  {3pt} 
  {} 
  {} 
  {\bfseries} 
  {.} 
  {.5em} 
  {} 
\theoremstyle{defi}
\newtheorem{defi}{Definition}
\newtheorem{assum}{Assumption}
\newtheoremstyle{rema}
  {3pt} 
  {3pt} 
  {} 
  {} 
  {\bfseries} 
  {.} 
  {.5em} 
  {} 
\theoremstyle{rema}
\newtheorem{rem}{Remark}
\newtheorem{ex}{Example}
\begin{document}
\title{Selfish routing on transportation networks with supply and demand constraints}
\author{Tommaso Toso, \IEEEmembership{Student Member, IEEE}, Paolo Frasca, \IEEEmembership{Senior Member, IEEE}, and~Alain~Y.~Kibangou,~\IEEEmembership{Member,~IEEE}
\thanks{This work has been supported in part by the French National Research Agency in the framework of the ``Investissements d’avenir'' program ANR-15-IDEX-02 and of LabEx PERSYVAL ANR-11-LABX-0025-01. }
\thanks{Tommaso Toso, Paolo Frasca and Alain Kibangou are with Univ. Grenoble Alpes, CNRS, Inria, Grenoble INP, GIPSA-lab, 38000 Grenoble, France (e-mail: tommaso.toso@grenoble-inp.fr; paolo.frasca@grenoble-inp.fr, alain.kibangou@univ-grenoble-alpes.fr). Alain Kibangou is also with Univ.\ of Johannesburg (Auckland Park Campus), Johannesburg 2006, South Africa.
}}

\maketitle

\begin{abstract}
   Traditional non-atomic selfish routing games present some limitations in properly modeling road traffic. This paper introduces a novel type of non-atomic selfish routing game leveraging concepts from Daganzo's cell transmission model (CTM). Each network link is characterized by a supply and demand mechanism that enforces capacity constraints based on current density, providing a more accurate representation of real-world traffic phenomena. We characterize the Wardrop equilibria and social optima of this game and identify a previously unrecognized inefficiency in selfish routing: partially transferring Wardrop equilibria, where only part of the exogenous flow traverses the network.

    %
\end{abstract}

\begin{IEEEkeywords}
Transportation Networks, Game Theory, Cell Transmission Model.
\end{IEEEkeywords}

\section{Introduction}

Non-atomic selfish routing games model the interactions of self-interested players in a network. Each player, associated with an origin-destination (OD) pair, aims to traverse the network from their origin to their destination. The cost of each link in the network depends on the number of players using that link. With full information about the network state, each player seeks to minimize their traversal cost. However, since players act uncoordinatedly and disregard the impact of their actions on others, the resulting equilibrium configurations are inefficient from a total cost perspective. These equilibria are known as Wardrop equilibria \cite{wardrop52,rough}.
A major concern in the analysis of selfish routing is the inefficiency of Wardrop equilibria. Various metrics have been developed to assess this inefficiency, with the Price of Anarchy (PoA) being one of the most commonly used \cite{rough}.

Non-atomic selfish routing games have numerous applications, notably in transportation networks, when users have access to information about the state of the network. This situation is more and more relevant in today's information-rich world. Nowadays, many motorists rely on real-time navigation systems to optimize their routes, significantly influencing traffic flow~\cite{th:cabannes}. Given the increasing awareness and responsiveness of users to traffic congestion, a game-theoretic approach like non-atomic selfish routing games is suitable for modeling contemporary traffic flow behavior.

Despite their usefulness in understanding traffic flow behavior and the impact of real-time routing systems \cite{th:cabannes,ibp,thai}, traditional non-atomic selfish routing games have limitations when applied to traffic networks. First, network links lack capacity constraints, which are essential for capturing typical congestion phenomena. Additionally, link costs are generally modeled as increasing functions of flow, which is inconsistent with established models of the physics of traffic, in which the relationship between traffic flow and travel time is non-monotonic~\cite{funddiag,ttimes}.

To address these limitations, this paper proposes a novel type of non-atomic selfish routing game, leveraging concepts from Daganzo's cell transmission model (CTM) \cite{daganzo1994,daganzo1995}. By considering both link flow and density, we characterize each network link with a supply and demand mechanism that enforces capacity constraints. This mechanism limits the flow through a link based on its current density, allowing us to identify congested sections. Moreover, travel times on each link are directly dependent on its density, aligning with traffic modeling principles. This combined approach provides a more accurate representation of real-world traffic network phenomena. Thanks to this new model, we are able to identify a potential drawback of selfish routing that extends beyond classical PoA analysis and was not recognized in the literature: \emph{partially transferring Wardrop equilibria}, that is, Wardrop equilibria that allow only part of the exogenous flow to enter and traverse the network.

\subsection{Contribution}
We claim three contributions in this paper.
First, we propose a novel type of selfish routing games on parallel networks, which is more suitable for modeling road networks based on the CTM. In this model, links are treated as cells with capacity constraints that depend on the density within the cell, and link travel times are increasing functions of density rather than flow. Second, we characterize the Wardrop equilibria (WE) and the social optima (SO) of this game and prove their essential uniqueness. Third, we introduce the concept of partially transferring WE and demonstrate that under certain conditions, the unique WE of the game can be partially transferring, even when the exogenous demand on the network is less than the min-cut capacity. This provides further evidence of the inefficiency of selfish routing.

\subsection{Related work}\label{sec:related}

The literature already includes some recent works addressing capacity constraints and congested traffic regimes, such as those proposed in \cite{ttimes,krichene2014,pedarsani}. In \cite{ttimes}, the authors propose a static traffic assignment model using density-based travel time functions. In \cite{krichene2014}, the authors analyze a Stackelberg routing game on a parallel network, where a central authority can control a fraction of the total traffic demand to improve the total cost on the network, thus improving efficiency. In \cite{pedarsani}, a mixed-autonomy traffic model is developed, proposing an optimal strategy to provide financial incentives for autonomous vehicles to steer traffic toward efficient equilibria. However, these models differ from ours as they do not involve any supply and demand mechanism. 

The maximum throughput that can be successfully transferred through the network equals its min-cut capacity \cite{shannon,Ford_Fulkerson_1956}.
The problem of identifying routing policies that prompt fully transferring flow allocations has received significant attention in the last years, mostly through dynamical models \cite{como2013a,como2013b,savla2014,como2015,dahleh2018}. In particular, in \cite{como2015} the authors study the behavior of a dynamical network flow model governed by distributed local routing policies allowed to depend on the network state. These policies are characterized by routing decisions at each non-destination node being made independently based only on the state of incoming and outgoing links, without considering the state of other nodes in the network. Nonetheless, capacity constraints are only applied at the exits of the links, allowing any amount of flow to enter a link. It is shown that if the exogenous flow which the network is subject to does not exceed the min-cut capacity, then the class of monotone distributed routing policies ensures that the system globally asymptotically converges to a state where the flow is fully transferred.

In \cite{coogan2015,lovisari}, the authors propose a dynamical network flow model encompassing the CTM with fixed routing policies (not necessarily fully transferring) at non-destination nodes. The main results concern convergence to equilibria. In \cite{coogan2015}, the authors develop a ramp metering control strategy for maximizing the transferred flow.

\subsection{Paper organization} Section~\ref{ch:ctm:sec:modeling} delves into the details of the proposed network structure. Here, we describe the mechanism for supply and demand on each link, define what constitutes valid traffic assignments, and introduce factors influencing travel time based on link density. Section~\ref{ch:ctm:sec:game} establishes a non-atomic selfish routing game on the network. We comprehensively analyze the Wardrop equilibria and social optima of this game, identifying necessary and sufficient conditions for the occurrence of partially transferring Wardrop equilibria. In Section~\ref{ch:ctm:beyond}, we provide an example showing that the problem of partial demand transfer occurs also in more complex network topologies. Finally, Section~\ref{ch:ctm:sec:conclusion} concludes the paper with some closing remarks.

\section{Network modeling}\label{ch:ctm:sec:modeling}
We consider a parallel network consisting of a single OD pair and $N$ distinct non-intersecting routes connecting them. Each route $p$ is composed of $n_p$ links. The network is subject to a constant positive exogenous flow of vehicles $\Phi$ that distributes among its routes. In the following, we describe the functioning of each network link in relation to the traffic density within it.   


\subsection{Characterization of the network links}
Given a link $l\in\mathcal{L}$, let $x_l$ (veh/km) and $f_l$ (veh/h) indicate its density, corresponding to the number of vehicles per unit of length, and its flow, corresponding to the number of vehicles per unit of time. Let $\overline{x}_l$ (veh/km), $\overline{f}_l$ (veh/h), $v_l$ (km/h) and $L_l$ (km) be positive finite constants representing the \emph{jam density} (maximum attainable density), the \emph{capacity} (maximum attainable flow), the \emph{free-flow speed} and the \emph{length} of the link. Now, associate with each link supply and demand functions $s_l(x_l),\ d_l(x_l)$, depending on the link density. 
Supply and demand functions are inspired by Daganzo's fundamental diagram \cite{funddiag} and take the following form:
\begin{equation}
    \label{ch:ctm:supp}
     s_l(x_l)= \min\{\overline{f}_l,w_l(\overline{x}_l-x_l)\},
\end{equation}
\begin{equation}
    \label{ch:ctm:dem}
    d_l(x_l)=\min\{v_lx_l,\overline{f}_l\}.
\end{equation}
Both functions are continuous and piece-wise linear. The supply function is non-increasing with density, reflecting the fact that as more vehicles occupy the link, the fewer additional vehicles the link can accommodate. In contrast, the demand function is non-decreasing, meaning that as more vehicles are on the link, the higher the number of vehicles aiming to leave it.
If we define the \emph{critical density} of a link as $x_l^c:=\overline{f}_l/v_l$, then $w_l=\overline{f}_l/(\overline{x}_l-x_l^c)$, so as to guarantee
\begin{center}
    $s_l(x_l^c)=d_l(x_l^c)=\overline{f}_l$.
\end{center}
When $x_{l}\leq x_l^c$, we say that the link is in \emph{free-flow} or \emph{free-flow regime}, whereas if $x_{l}>x_l^c$, the link is said to be \emph{congested} or in \emph{congested regime}.

In the following section, we characterize the feasible density and flow vectors for a network whose links exhibit such a supply and demand mechanism.

\begin{figure*}
    \centering
    \includegraphics[width=.95\textwidth]{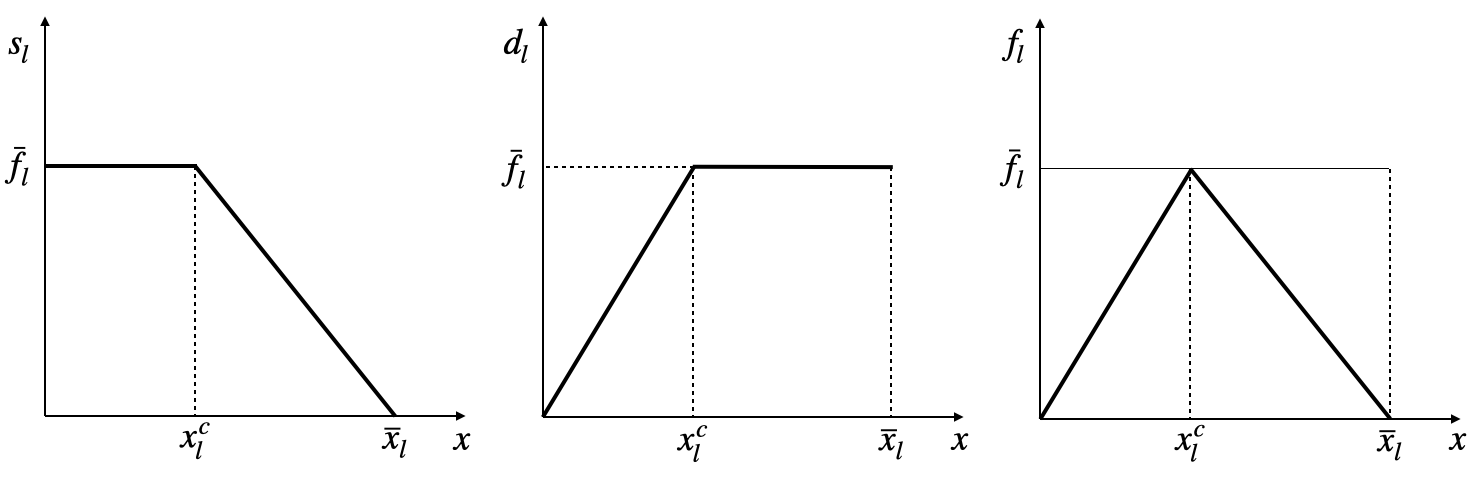}
    \caption{From left to right: supply function as in \eqref{ch:ctm:supp}, demand function as in \eqref{ch:ctm:dem}, Daganzo's fundamental diagram of traffic.}
    \label{fig:fund}
\end{figure*}

\subsection{Traffic assignments}

\begin{defi}
A vector $R=(R_1,\dots,R_N)\in\mathbb{R}_{\geq0}^N$ such that $\sum_{i=1}^NR_i=1$ is called a \emph{routing vector}. Each element of $R$ is called \emph{routing ratio}.
\label{fa}
\end{defi}
The supply and demand functions determine the exchange flow at the interface between contiguous cells. Let $x\in\mathbb{R}_{\geq0}^{n_1+\dots+n_N},\ f\in\mathbb{R}_{\geq0}^{p+n_1+\dots+n_N}$ be the \emph{density} and \emph{flow vectors}, respectively. Consider a route $p$ and two of its consecutive links, $l_p$ and $(l+1)_p$. Then, the inflow from link $l_p$ to link $(l+1)_p$ is
\begin{equation}
    f_{l_p}(x)=\min\{d_{l_p}(x_{l_p}),s_{(l+1)_p}(x_{(l+1)_p})\}.
    \label{inter}
\end{equation}
The inflow of the first link of a route is
\begin{equation}
    f_{0_p}(x)=\min\{\Phi R_p,s_{0_p}(x_{0_p})\},
    \label{start}
\end{equation}
Finally, since the final link of each route is not connected to any other link,
\begin{equation}
    f_{n_p}(x)=d_{n_p}(x_{n_p}).
    \label{end}
\end{equation}
With abuse of notation, we will indicate the density and flow vectors associated with route $p$ as $x_p\in\mathbb{R}_{\geq0}^{n_p},\ f_p\in\mathbb{R}_{\geq0}^{n_p+1}$.

Given a routing vector, we are interested in identifying all the density vectors associated with it.

\begin{figure}
\vspace{.5cm}
    \centering
    \includegraphics[width=.45\textwidth]{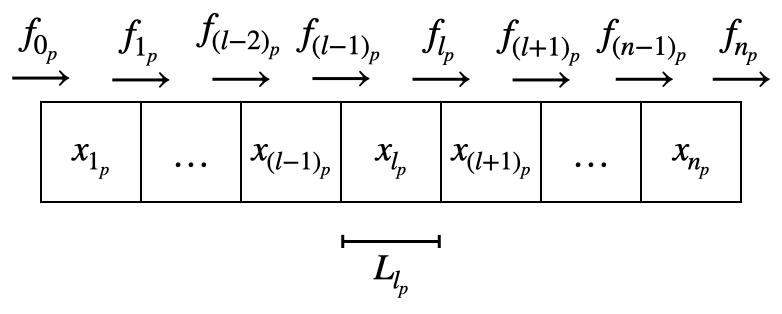}
    \caption{Structure of route $p$.}
    \label{fig:route}
\end{figure}

\begin{defi}
Given a routing vector $R$, a \emph{consistent density vector} $x^R$ is a density vector satisfying to
\begin{equation}
    f_{0_p}(x^R)=f_{1_p}(x^R)= \dots =f_{n_p}(x^R),\quad p=1,\dots,N.
    \label{ch:ctm:idv}
\end{equation}
Let $C(R)$ be the set of all consistent density vectors associated with $R$.
\end{defi}
Consistent density vectors are density vectors such that the inflow and the outflow of each link correspond and this flow is equal for all the links of the route. 
\begin{defi}
    A \emph{traffic assignment} is a pair $(R,x^R)$, where $R$ is routing vector and $x^R$ is a consistent density vector of it.
\end{defi}

Depending on the routing vector, the associated consistent density vectors might be characterized by some congested links or not. \begin{defi}
    The \emph{capacity of route} $p$, $\overline{z}_p$, is the capacity of the route's lowest capacity link:
    \begin{equation}
        \overline{z}_p:=\min_{l\in p}\overline{f}_l.
    \end{equation}
\end{defi}
Given an exogenous flow $\Phi$ and a routing vector $R$, consider the following sets:
    \begin{equation}
    \begin{aligned}
    P_F&=\{p\in\{1,\dots,N\}|\,\Phi R_p<\overline{z}_p\},\\
    P_C&=\{p\in\{1,\dots,N\}|\,\Phi R_p=\overline{z}_p\},\\
    P_S&=\{p\in\{1,\dots,N\}|\,\Phi R_p>\overline{z}_p\}.
    \end{aligned}
    \end{equation}
The set $P_F$ consists of the routes assigned a fraction of exogenous flow smaller than their capacity. The set $P_C$ consists of the routes assigned a fraction of exogenous flow equal to their capacity. Finally, The set $P_S$ consists of the routes assigned a fraction of exogenous flow exceeding their capacity. Then, let us discuss about the shape of the consistent density vectors for a given routing vector.

To ease the discussion, we assume that each route has a unique link of minimum capacity.
\begin{assum}
    Route $p$ has a unique minimum capacity link $b_p\in\{1_p,\dots,n_p\}$, $p=1,\dots,N$.
    \label{ass:ub}
\end{assum}
 This assumption, while introducing a loss of generality, does not undermine the relevance of the findings. Instead, it allows us to focus on specific aspects of the problem and draw conclusions that are still applicable to a wide range of scenarios.

\begin{prop}
    Let Assumption \ref{ass:ub} hold. 
    Then, $\forall p\in P_F$, there exists a unique consistent density vector for route $p$ which is as follows:
    \begin{equation}
    \begin{aligned}
        &x_{l_p}^R=\frac{\Phi R_p}{v_{l_p}},\quad\ \qquad l_p=1_p,\dots,n_p,\\
        &f_{l_p}(x^R)=\Phi R_p,\qquad l_p=1_p,\dots,n_p;
    \end{aligned}
        \label{ffd}
    \end{equation}
    Instead, $\forall p\in P_S$, there exists a unique consistent density vector for route $p$ which is as follows:
    \begin{equation}
        \begin{aligned}
        &x_{l_p}^R=\overline{x}_{l_p}-\frac{\overline{z}_p}{w_{l_p}},\qquad\ \,\qquad l_p=1_p,\dots,(b-1)_p\\
        &x_{l_p}^R=\frac{\overline{z}_p}{v_{l_p}},\qquad \qquad\quad\quad\ \ \ \, l_p=b_p,\dots,n_p,\\
        &f_{l_p}(x^R)=\overline{z}_p,\qquad\qquad\qquad l_p=1_p,\dots,n_p;
        \end{aligned}
        \label{vd}
    \end{equation}
    Finally, $\forall p\in P_C$, a consistent density vector on $p$ is any vector such that, given $k_p\in\{1,\dots,(b-1)_p\}$:
    \begin{equation}
        \begin{aligned}
        &x_{l_p}^R=\frac{\overline{z}_p}{v_{l_p}},\qquad \qquad\quad\ \ \, l_p=1_p,\dots,(k-1)_p;\\
        &x_{k_p}^R\in\biggl[x_{k_p}^c,\overline{x}_{l_q}-\frac{\overline{z}_p}{w_{l_p}}\biggl],\\
        &x_{l_p}^R=\overline{x}_{l_p}-\frac{\overline{z}_p}{w_{l_p}},\,\ \ \,\quad\quad\, l_p=(k+1)_p,\dots,(b-1)_p,\\
        &x_{l_p}^R=\frac{\overline{z}_p}{v_{l_p}},\qquad \qquad\quad\ \ \, l_p=b_p,\dots,n_p,\\
        &f_{l_p}(x^R)=\overline{z}_p,\qquad\qquad\ \, l_p=1_p,\dots,n_p.
        \end{aligned}
        \label{sd}
    \end{equation}
    \label{prop:shapeidv}
\end{prop}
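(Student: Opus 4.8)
The plan is to use the consistency condition \eqref{ch:ctm:idv}, which collapses the flows on route $p$ to a single common value $\phi:=f_{0_p}(x^R)=\dots=f_{n_p}(x^R)$, and then to reconstruct the densities link by link from $\phi$. First I would locate $\phi$. Because $s_l,d_l\le\overline{f}_l$, every interface flow \eqref{inter} is bounded by $\min_{l\in p}\overline{f}_l=\overline{z}_p$ and the inflow \eqref{start} by $\Phi R_p$, so $\phi\le\min\{\Phi R_p,\overline{z}_p\}$. I would then exhibit a consistent vector attaining this bound, giving $\phi=\Phi R_p$ on $P_F$ and $\phi=\overline{z}_p$ on $P_C\cup P_S$.

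The backbone of the argument is a backward sweep from the exit. By \eqref{end} the outflow is $d_{n_p}(x^R_{n_p})=\phi$; whenever $\phi<\overline{f}_l$ this saturates the \emph{demand} branch and pins $x^R_l=\phi/v_l<x_l^c$, i.e.\ free flow. Once a link is in free flow its supply equals $\overline{f}_l\ge\overline{z}_p\ge\phi$, so the upstream interface \eqref{inter} is demand-constrained and the same conclusion propagates one link upstream. Iterating, the links $b_p,\dots,n_p$ are all in free flow at $x^R_l=\phi/v_l$. The only link at which $\phi$ may equal $\overline{f}_l$ is the bottleneck in the cases $\phi=\overline{z}_p$: there the demand is saturated and gives only $x^R_{b_p}\ge x_{b_p}^c$, so I would combine it with the supply lower bound $s_{b_p}\ge\phi$ inherited from the interface feeding $b_p$ to force $x^R_{b_p}=x_{b_p}^c=\overline{z}_p/v_{b_p}$. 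This already yields \eqref{ffd} on $P_F$ (where the sweep reaches the first link and the inflow \eqref{start} closes with $\phi=\Phi R_p$) and the downstream half of \eqref{vd} and \eqref{sd}.

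The upstream half is where the three regimes diverge, and it is governed by the inflow $\phi=\min\{\Phi R_p,s_{0_p}\}$. On $P_S$, $\Phi R_p>\overline{z}_p=\phi$ forces the inflow to be \emph{supply}-constrained, $s_{0_p}=\overline{z}_p$, i.e.\ the first link congested at $\overline{x}_l-\overline{z}_p/w_l$; a congested link has saturated demand $\overline{f}_l>\overline{z}_p$, which through \eqref{inter} drives its successor's supply down to $\overline{z}_p$, and congestion propagates forward up to $(b-1)_p$, producing the upstream half of \eqref{vd}. On $P_C$, by contrast, $\Phi R_p=\overline{z}_p=\phi$, so the inflow min is met for \emph{any} $s_{0_p}\ge\overline{z}_p$: the first link need not be congested. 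This slack is precisely the source of multiplicity, the transition from a free-flow prefix to a congested block being free to occur at any link $k_p\le(b-1)_p$, as in \eqref{sd}.

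The main obstacle is the exhaustive enumeration on $P_C$. I would classify each link $l\in\{1_p,\dots,(b-1)_p\}$ as demand-tight ($x^R_l=\overline{z}_p/v_l$), supply-tight ($x^R_l=\overline{x}_l-\overline{z}_p/w_l$), or slack (strictly between, with both $d_l,s_l>\overline{z}_p$), and rewrite each interface identity $\min\{d_l,s_{l+1}\}=\overline{z}_p$ as ``$l$ demand-tight or $l+1$ supply-tight''. A short case check then shows that neither a ``supply-tight then demand-tight'' pair nor two slack links can coexist, so the tightness type is monotone along the route: demand-tight links form a prefix, supply-tight links a suffix, and at most one intermediate link is left free, its density sweeping the interval in \eqref{sd}. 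The reverse inclusion---that every vector of this form is consistent---is the routine direction, obtained by substituting \eqref{ffd}--\eqref{sd} into \eqref{start}--\eqref{end}. The delicate points I expect to spend time on are pinning the exact endpoints of the transition link's interval and confirming that ranging $k_p$ over $\{1,\dots,(b-1)_p\}$ with these endpoints neither under- nor over-counts the consistent vectors on route $p$.
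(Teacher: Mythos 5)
Your proposal is correct and follows essentially the same route as the paper: both proofs reduce consistency to deciding which branch of each $\min$ in \eqref{inter}--\eqref{end} is active and propagate that decision along the route, with the bottleneck $b_p$ as the pivot separating a possibly congested upstream block from a free-flow downstream tail. The only organizational difference in the downstream part is direction: the paper assumes some interface is supply-constrained and propagates congestion \emph{forward} until it contradicts the outflow condition \eqref{end} at link $n_p$, whereas you run the contrapositive, sweeping free flow \emph{backward} from the exit; the two are logically interchangeable, and your version has the small advantage of simultaneously pinning the common flow $\phi$ via \eqref{start} rather than treating $f_{0_p}<\Phi R_p$ as a separate case. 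Where your write-up genuinely adds something is the $P_C$ case, which the paper dispatches in two sentences; your demand-tight/supply-tight/slack classification and the monotonicity of the tightness type along $\{1_p,\dots,(b-1)_p\}$ make the enumeration airtight. Your flagged worry about the endpoints of the transition link's interval is well founded: carrying out the slack analysis, the interface identities only require $d_{k_p}\geq\overline{z}_p$ and $s_{k_p}\geq\overline{z}_p$, which gives $x_{k_p}^R\in\bigl[\overline{z}_p/v_{k_p},\,\overline{x}_{k_p}-\overline{z}_p/w_{k_p}\bigr]$, whose lower endpoint is strictly below $x_{k_p}^c$ whenever $\overline{f}_{k_p}>\overline{z}_p$. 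This matches the paper's own first example (the interval $[25,87.5]$ for route $1$ under $R^{(3)}$, where $\overline{z}_1/v=25$ while $x^c=37.5$) but is strictly larger than the interval $[x_{k_p}^c,\cdot]$ printed in \eqref{sd}, so do not contort your argument to reproduce the printed lower endpoint---it appears to be a typo in the statement.
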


\begin{rem}
    It follows from \eqref{ch:ctm:supp} and \eqref{ch:ctm:dem} that, for any link $l_p$,
    \begin{equation*}
        x_{l_p}^R=\frac{\Phi R_p}{v_{l_p}}<x_{l_p}^c,\qquad x_{l_p}^R=\overline{x}_{l_p}-\frac{\overline{z}_p}{w_{l_p}}>x_{l_p}^c.
    \end{equation*}
    This means that if $p\in P_F$, then all links of route $p$ are in free-flow regime. Contrarily, if $p\in P_S$, then the first $b-1$ links of route $p$ are in congested regime. Finally, if $p\in P_C$, depending on the value of $k_p$, the route might present links with congested regime or not, extending backward from the least capacity link to the origin.
\end{rem}

\begin{proof}
    The proof is split into three parts, each for one of the sets $P_F$, $P_C$ and $P_S$. 
\begin{enumerate}
    \item\label{point1} Consider a route $p\in P_F$. Suppose that $f_{0_p}=\Phi R_p$. Then, from \eqref{ch:ctm:idv}, $f_{l_p}=\Phi R_p,\ l_p=1_p,\dots,n_p$. This implies that  
    \begin{equation*}
        d_{l_p}(x_{l_p})=\Phi R_p\ \text{   or   }\ s_{(l+1)_p}(x_{(l+1)_p})=\Phi R_p.
    \end{equation*}
Suppose that $f_{l_p}=d_{l_p},\ l_p=1_p,\dots,n_p$. Then, it is straightforward that \eqref{ffd} is the only possible density vector with this form satisfying to \eqref{ch:ctm:idv}.
Suppose now that there exists $k_p\in p$ such that $f_{k_p}(x_p)=s_{(k+1)_p}(x_{(k+1)_p})$. Since 
    \begin{equation*}
        f_{k_p}(x_p)=\Phi R_p<\overline{f}_{(k+1)_p},
    \end{equation*}
    it must be that
    \begin{equation*}
        x_{(k+1)_p}>x_{(k+1)_p}^c,
    \end{equation*}
    so that
    \begin{equation*}
        s_{(k+1)_p}(x_{(k+1)_p})<d_{(k+1)_p}(x_{(k+1)_p}).
    \end{equation*}
    From \eqref{ch:ctm:idv}, the last inequality implies that $f_{(k+1)_p}(x_p)=s_{(k+2)_p}(x_{(k+2)_p})$. The same argument can be applied inductively to the subsequent route links, up to the final link of the route. Nevertheless, since $f_{(n-1)_p}(x_p)=s_{n_p}(x_{n_p})$, then $x_{n_p}>x_{n_p}^c$, which in turn implies that outflow of link $n_p$ is equal to $\overline{f}_{n_p}$. As this violates \eqref{ch:ctm:idv}, we proved that there exists no consistent density vector where some links are in congested regime. Hence, the consistent density vector is unique and as in \eqref{ffd}. Using the same argument, it follows that any density vector such that $f_{0_p}<\Phi R_p$ is not a consistent density vector.
    \item \label{point2}Consider a route $p\in P_S$. Clearly, since $\Phi R_p>\overline{z}_p$, only part of $\Phi R_p$ can be accommodated. Suppose that $f_{0_p}=\overline{z}_p$. This imposes that all links preceding $b_p$, which have higher capacity, must be in congested regime so as to guarantee that the flow transferred from a link to the following is $\overline{z}_p$:
    \begin{equation*}
        x_{l_p}=\overline{x}_{l_p}-\frac{\overline{z}_p}{w_{l_p}},\quad f_{(l-1)_p}(x)=s_{l_p}(x_{l_p})=\overline{z}_p,
    \end{equation*}
    $l_p=1_p,\dots,(b-1)_p$. Then, the density on $b_p$ must be equal to $\overline{z}_p/v_{l_p}$. 
    As for the links from $(b+1)_p$ to $n_q$, one can apply the same argument as in \ref{point1}. to the sub-route they form. Again, similarly to \ref{point1}, density vectors such that $f_{0_p}<\overline{z}_p$ are not consistent density vectors, as they do not fulfil to \eqref{ch:ctm:idv}. Finally, density vectors such that $f_{0_p}>\overline{z}_p$ cannot be consistent, as this implies $f_{0_p}>\overline{f}_{b_p}$, which contradicts \eqref{ch:ctm:idv}.
    \item 
Consider a route \(p \in P_C\). Since \(\Phi R_p = \overline{z}_p\), all the inflow can be accommodated. It is easy to verify that all density vectors of the form as in \eqref{sd} satisfy \eqref{ch:ctm:idv} and entirely accommodate \(\Phi R_p\). All consistent density vectors cannot take any different form. For the same argument in \ref{point1} and \ref{point2}, links from \((b+1)_q\) to \(n_q\) cannot be in a congested regime. As for the links preceding \(b_q\), from \eqref{inter}, any congested link must be followed by a congested link that limits the incoming flow from its predecessor to be equal to \(\overline{z}_p\). Finally, also in this case density vectors such that $f_{0_p}<\overline{z}_p$ are not consistent density vectors, as they do not fulfil to \eqref{ch:ctm:idv}. 
\end{enumerate}
\end{proof}

\begin{figure}
    \centering
    \includegraphics[width=.25\textwidth]{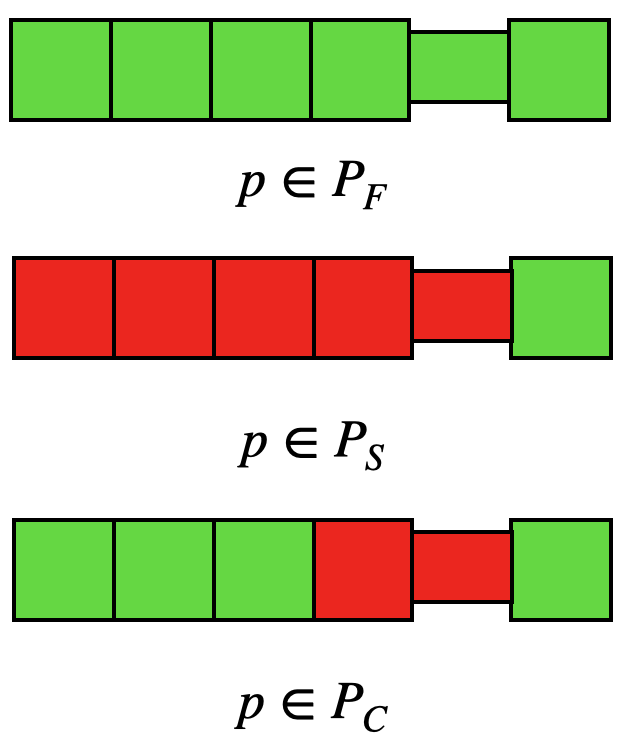}
    \caption{Congestion patterns for routes in $P_F$, $P_C$, $P_S$. Green links are in free-flow regime, red links are congested.}
    \label{fig:coongestion}
\end{figure}


Proposition \ref{prop:shapeidv} prescribes that when a routing vector $R$ violates the capacity constraints of some routes, i.e., $P_S\neq\emptyset$, then the unique consistent density vector associated with it is characterized by congested links. As implied by equation \eqref{sd}, these traffic assignments transfer only a fraction of the exogenous flow directed to that route. We call such traffic assignments \emph{partially transferring}. On the other hand, traffic assignments such that $P_S$ are called \emph{fully transferring}. Given an exogenous flow $\Phi$ exceeding the min-cut capacity of the network, which in our case simply corresponds to the sum of all route capacities, clearly all of its traffic assignments are partially transferring. Therefore, we turn our attention to exogenous flows that do not exceed the min-cut capacity of $\mathcal{G}$.
\begin{assum}[]
    The exogenous flow $\Phi$ is no greater than the min-cut capacity of $\mathcal{G}$:
    \begin{equation*}
        \Phi\leq\sum_{p=1}^N\overline{z}_p.
    \end{equation*}
    \label{mincut}
\end{assum}
Although any $\Phi$ satisfying to Assumption \ref{mincut} admits a fully transferring traffic assignment, such exogenous flows still admit partially transferring traffic assignments, in general.

\begin{figure}
    \centering
    \includegraphics[width=.4\textwidth]{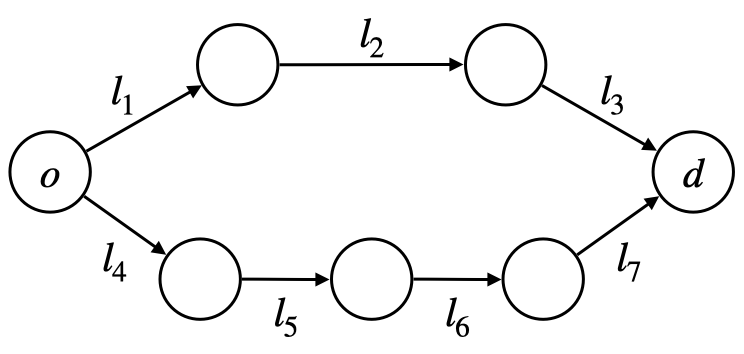}
    \caption{A two-route network. Route $1$ consists of three links, whereas Route $2$ of four.}
    \label{fig:example}
\end{figure}

\begin{ex}[Partially transferring traffic assignment]
    Consider the network in Figure \ref{fig:example}, and assume it is characterized as follows:
    \begin{equation}
        \begin{aligned}
            &\overline{f}=(1500,1500,1000,1500,1500,1500,1500),\\
            &\overline{z}=(1000,1500),\\
            &\overline{x}=(187.5,187.5,100,187.5,187.5,187.5,187.5),\\
            &v_l=40,\ \forall l\in\mathcal{L}.
        \end{aligned}
        \label{param}
    \end{equation}
    Suppose that the network is subject to a constant exogenous flow $\Phi=1500$, which satisfies Assumption~\ref{mincut}. 
    Consider the three following routing vectors:
    \begin{equation*}
        R^{(1)}=(1/3,2/3),\quad R^{(2)}=(3/4,1/4)\quad R^{(3)}=(2/3,1/3).
    \end{equation*}
    When $\Phi$ is assigned according to $R^{(1)}$, then both routes belong to $P_F$, and the unique consistent density vector is
    \begin{equation*}
        x^{R^{(1)}}=(12.5,12.5,12.5,25,25,25,25).
    \end{equation*}
    Hence, the traffic assignment $(R^{(1)},x^{R^{(1)}})$ is unique and fully transferring.
    
    When $\Phi$ is assigned according to $R^{(2)}$, then route $1$ belongs to $P_S$, whereas route $2$ to $P_F$. The unique consistent density vector associated with this routing vector is
    \begin{equation*}
        x^{R^{(2)}}=(87.5,87.5,25,9.375,9.375,9.375,9.375).
    \end{equation*}
    The assignment $(R^{(2)},x^{R^{(2)}})$ is clearly partially transferring, and the amount of flow that does not get transferred equals $375$ veh/h. 

    Finally, when $\Phi$ is assigned according to $R^{(3)}$, then route $1$ belongs to $P_C$, whereas route $2$ to $P_F$. In this case, there exist multiple consistent density vectors, which take one of the two following form:
    \begin{equation*}
    \begin{aligned}
        &x^{R^{(3)}}=(25,x_{l_2}\in[25,87.5],25,12.5,12.5,12.5,12.5),\\
        &x^{R^{(3)}}=(x_{l_1}\in[25,87.5],87.5,25,12.5,12.5,12.5,12.5).
        \end{aligned}
    \end{equation*}
    In this case, all possible traffic assignments $(R^{(3)},x^{R^{(3)}})$ are fully transferring. \qed
\end{ex}

\subsection{Link travel times} 

\begin{figure}
    \centering
    \includegraphics[width=.45\textwidth]{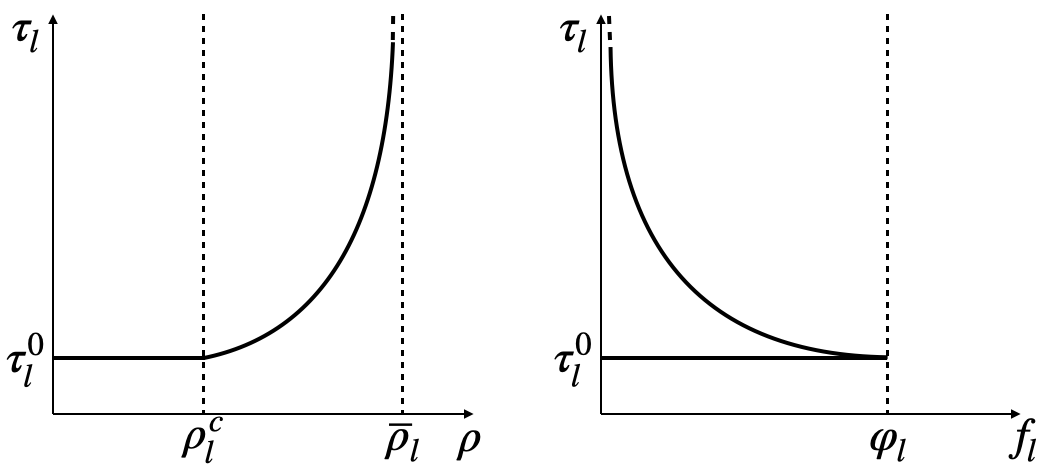}
    \caption{Link travel time as a function of the link density (left) and the relationship between link travel time and link flow (right).}
    \label{fig:time}
\end{figure}
Consistently with the fundamental diagram of traffic, we model travel times as in \cite{krichene2014}:
\begin{equation}
\begin{aligned}
    \tau_{l}:[0,\overline{x}_{l}]&\rightarrow\mathbb{R}_{>0}\cup+\infty\\
    x_l&\mapsto \tau_l(x_{l})=L_{l}\frac{x_{l}}{f_{l}(x_{l})}
\end{aligned},\quad l\in\mathcal{L},
\end{equation}
where $L_{l}$ is the length of link $l$. From the shape of the travel time functions, for a link $l_p$, when $f_{l_p}(x_{l_p})=d_{l_p}(x_{l_p})$, then $\tau_{l_p}(x_{l_p})=L_{l_p}/v_{l_p}$. Thus, when the link is in free-flow, its travel time is constant and equal to the free-flow travel time $\tau_{l_p}^F:=L_{l_p}/v_{l_p}$. On the contrary, when $f_{l_p}(x_{l_p})=s_{l_p}(x_{l_p})$, $\tau_{l_p}\geq\tau_{l_p}^F$ and it is increasing in $x_{l_p}$.  

The travel time of each route $\tau_p(x)$ is simply defined as the sum of the link travel times of all route links:
\begin{equation}
    \tau_p(x_p)=\sum_{l\in p}\tau_l(x_l),\quad p=1,\dots,N.
\end{equation}
For a given routing vector, the travel time of a route $p$, depending on which among the sets $P_F$, $P_S$ and $P_C$ it belongs to, will be as follows:
\begin{itemize}
    \item if $p\in P_F$, then the route attains the lowest possible value of travel time, the \emph{free-flow route travel time}:
\begin{equation*}
    \tau_p^F:=\sum_{l\in p}\tau_l^F; 
\end{equation*}
\item if $p\in P_S$, then the route attains the following value of travel time:
\begin{equation*}
    \tau_p^S:=\sum_{l=1_p}^{(b-1)_p}\tau_l\left(\overline{x}_{l}-\frac{\overline{z}_p}{w_{l}}\right)+\sum_{l=b_p}^{n_p}\tau_l^F; 
\end{equation*}
\item if $p\in P_C$, then the route can attain any value of travel time between $\tau_p^F$ and $\tau_p^S$, precisely
\begin{equation*}
\begin{aligned}
    \tau_p(x_p)=&\sum_{l=1_p}^{(k-1)_p}\tau_l^F+\tau_{k_q}(x_{k_q}^R)+\\&+\sum_{l=(k+1)_p}^{(b-1)_p}\tau_l\left(\overline{x}_{l}-\frac{\overline{z}_p}{w_{l}}\right)+\sum_{l=b_p}^{n_p}\tau_l^F.
\end{aligned}
\end{equation*}
\end{itemize}
Before moving to the next section, it proves convenient to define the following quantities. Given a route $p\in\{1,\dots,N\}$, with abuse of notation, let $\tau_p^{-1}:[\tau_p^F,\tau_p^S]\rightarrow\mathbb{R}_{\geq0}^{n_p}$ be the function that, given $\tau\in[\tau_p^F,\tau_p^S]$, returns a unique consistent density vector $x_p^\tau$ of the form \eqref{sd} such that 
\begin{equation*}
    \tau_p(x_p^\tau)=\tau.
\end{equation*}

\section{Non-atomic routing game (NRG)}\label{ch:ctm:sec:game}
Let us indicate the NRG as $(\mathcal{G},\Phi)$.  Each vehicle chooses its route to minimize its travel time according to the link travel time functions $\tau_l$. 
We assume that the routes are ordered by increasing free-flow travel time, and, to ease the discussion, the travel times $\tau_p^S$ are assumed to be distinct.
\begin{assum}\label{ass3}
    The free-flow travel times $\tau_p^F$ and the travel times $\tau_p^S$ are all distinct, and routes are ordered by increasing free-flow travel times:
    \begin{equation}
    \begin{aligned}
        & \tau_1^F<\tau_2^F<\dots<\tau_N^F,\\
        & \tau_p^S\neq \tau_q^S, \ \forall p,q\in\{1,\dots,N\}.
    \end{aligned} 
    \end{equation}
\end{assum}


\subsection{Wardrop equilibria}
Now, we formalize the notion of Wardrop equilibrium in our setting. 

\begin{defi}[Wardrop equilibrium]
A \emph{Wardrop equilibrium (WE)} of the NRG $(\mathcal{G},\Phi)$ is a traffic assignment $(R^W,x^W)$ such that
\begin{equation}
    R_p^W>0\ \Rightarrow\ \tau_p(x^W)\leq\tau_q(x^W),\quad \forall q=1,\dots,N.
    \label{wc}
\end{equation}
\end{defi}
The following result states characterizes the WE of the NRG $(\mathcal{G},\Phi)$, and establishes whether they are fully or partially transferring. 
In order to state it, let us define
    \begin{equation*}
        k:=\min\biggl\{p\in\{1,\dots,N\}\,\biggl|\,\Phi-\sum_{j=1}^{k}\overline{z}_j\leq0\biggl\},
        \end{equation*}
        \begin{equation*}
        U:=\bigl\{p\in\{1,\dots,k\}\,\bigl|\,\tau_p^S\leq \tau_j^F,\text{ for some }j\leq k\bigl\}.
        \end{equation*}
The index $k$ represents how many of the most efficient routes, i.e., routes with smallest travel time, it takes to fully accommodate the exogenous flow $\Phi$, while the set $U$ consists of those routes such that their free-flow travel time exceeds $\tau_p^S$, for some $p$ among the first $k$ most efficient routes. 

\begin{thm}[Characterization of the WEs]
    Consider the NRG $(\mathcal{G},\Phi)$ and suppose that Assumption \ref{ass:ub}, \ref{mincut} and \ref{ass3} hold. Then, if $U=\emptyset$, the game admits a unique fully transferring WE $(R^W,x^W)$, such that
        \begin{equation}
           \label{ftw}
           \begin{aligned}
               &\Phi R_p^W=\begin{cases}
                    \overline{z}_p, & p=1,\dots,k-1\\
                    \Phi-\sum_{p=1}^{z-1}\overline{z}_p, & p=k\\
                    0, & p=k+1,\dots,N
                \end{cases},\\&\\
               &x_p^W=\tau_{p}^{-1}(\tau_k^F),\quad p=1,\dots k-1,\\
               &x_k^W=\left(\frac{\Phi-\sum_{p=1}^{k-1}\overline{z}_p}{v_{1_k}},\dots,\frac{\Phi-\sum_{p=1}^{k-1}\overline{z}_p}{v_{n_k}}\right),
           \end{aligned}
        \end{equation}
        and all used routes share the same travel time $\tau_k^F$.
        
       If $U\neq\emptyset$, let $u:=\min U$ and let 
       \begin{equation*}
           j:=\min\{p=u+1,\dots,k\,|\,\tau_p^F\geq \tau_u^S\}.
       \end{equation*}
       Then, \begin{itemize}
       \item if $\tau_j^F>\tau_u^S$, then the game admits a unique partially transferring WE $(R^W,x^W)$, such that
           \begin{equation}
           \begin{aligned}
           &\Phi R_p^W=\begin{cases}
                    \overline{z}_p, & p\in\{1,\dots,j-1\}\setminus u\\
                    \Phi-\sum_{p=1,p\neq u}^{j-1}\overline{z}_p, & p=u\\
                    0, & p=j,\dots,N
                \end{cases},
           \\
           &x_p^W=\tau_p^{-1}(\tau_u^S),\quad p=1,\dots,j-1,
           \end{aligned}
           \label{ptf2}
       \end{equation}
       and all used routes share the same travel time $\tau_u^S$;
       \item if $\tau_j^F=\tau_u^S$, then $(R^W,x^W)$ reads
           \begin{equation}
           \begin{aligned}
           &\Phi R_p^W=\begin{cases}
                    \overline{z}_p, & p\in\{1,\dots,j-1\}\setminus u\\
                   \Phi R_u^W, & p=u\\
                   \Phi(1-\sum_{i=1}^{j-1}R_j^W), &p=j\\
                    0, & p=j+1,\dots,N
                    
                \end{cases},\\&  \Phi R_u^W\in\left[\Phi-\sum_{p=1,p\neq u}^{j}\overline{z}_p,\Phi-\sum_{p=1,p\neq u}^{j-1}\overline{z}_p\right], \\
           &x_p^W=\tau_p^{-1}(\tau_u^S),\quad p=1,\dots,j-1,\\
           &x_j^W=\left(\frac{\Phi(1-\sum_{i=1}^{j-1}R_j^W)}{v_{1_j}},\dots,\frac{\Phi(1-\sum_{i=1}^{j-1}R_j^W)}{v_{n_j}}\right).
           \end{aligned}
           \label{ptf1}
       \end{equation}
       If $j<k$, then $(R^W,x^W)$ is partially transferring, and all used routes hare the same travel time $\tau_j^F=\tau_u^S$. If $j=k$, then $(R^W,x^W)$ is fully transferring if and only if $\Phi R_u^W=\overline{z}_u$, and all used routes hare the same travel time $\tau_k^F=\tau_u^S$. 
       \end{itemize}
    \label{thm:we}
\end{thm}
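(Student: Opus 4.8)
The plan is to reduce the Wardrop condition \eqref{wc} to a one-dimensional ``fill the cheapest routes first'' problem in the single scalar $\tau^*$, the common travel time shared by all used routes, and then read off the flow pattern. The starting point is the \emph{per-route travel-time menu} implied by Proposition~\ref{prop:shapeidv} together with the travel-time formulas: a route $p$ realizes travel time exactly $\tau_p^F$ whenever $p\in P_F$ (in particular when it carries no flow), any value in $[\tau_p^F,\tau_p^S]$ when $p\in P_C$, and exactly $\tau_p^S$ when $p\in P_S$; moreover it transfers all of its assigned flow in the first two cases but only $\overline{z}_p$ in the third. The first step is to recast \eqref{wc}: at a WE there is a value $\tau^*$ such that every used route realizes $\tau^*$ while every unused route has $\tau_p^F\ge\tau^*$. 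Since an unused route sits in $P_F$ with travel time $\tau_p^F$, this forces two structural facts I would establish up front: (i) every route with $\tau_p^F<\tau^*$ must be used and hence saturated (in $P_C$ or $P_S$, transferring exactly $\overline{z}_p$); and (ii) no route may have $\tau_p^S<\tau^*$, because such a route can neither be used (its travel time caps at $\tau_p^S<\tau^*$) nor left idle (its free-flow time $\tau_p^F\le\tau_p^S<\tau^*$ would violate \eqref{wc}). Fact~(ii) is the mechanism that pins the equilibrium and produces partial transfer.

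With the menu in hand I would determine $\tau^*$ by sweeping it upward from $\tau_1^F$ and tracking the flow that the saturated routes absorb. Ordering by $\tau_p^F$ (Assumption~\ref{ass3}), routes activate one at a time, and by fact~(i) each activated route -- except the single one with $\tau_p^F=\tau^*$, kept in free flow -- is saturated to $\overline{z}_p$. The index $k$ identifies the shortest prefix whose capacities sum to at least $\Phi$, so the \emph{fully transferring} candidate fills routes $1,\dots,k-1$ to capacity, places the residual $\Phi-\sum_{p<k}\overline{z}_p$ on route $k$ in free flow, and sets $\tau^*=\tau_k^F$. This is admissible exactly when every saturated route can be congested up to $\tau_k^F$, i.e.\ $\tau_k^F\le\tau_p^S$ for all $p\le k-1$; by Assumption~\ref{ass3} and the definition of $U$ this is precisely $U=\emptyset$. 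I would then verify \eqref{ftw} directly: the densities $x_p^W=\tau_p^{-1}(\tau_k^F)$ are well defined since $\tau_k^F\in[\tau_p^F,\tau_p^S]$, route $k$ gives $\tau_k^F$ in free flow, and the unused routes satisfy $\tau_p^F>\tau_k^F$, so \eqref{wc} holds and all of $\Phi$ is transferred.

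When $U\neq\emptyset$ the sweep cannot reach $\tau_k^F$: fact~(ii) caps $\tau^*$ at the smallest congested ceiling $\tau_p^S$ among the activated routes, and one checks this ceiling is hit strictly before the cumulative capacity reaches $\Phi$ (any route with $\tau_p^S<\tau_k^F$ has $\tau_p^F<\tau_k^F$, so the routes activated by then have capacities summing to at most $\sum_{p<k}\overline{z}_p<\Phi$). The pinning route is overloaded into $P_S$ to absorb the residual demand while transferring only its capacity -- this is the partial-transfer phenomenon. The pattern \eqref{ptf2} then saturates every used route $1,\dots,j-1$ except the pinning route $u$, which carries $\Phi-\sum_{p\le j-1,\,p\neq u}\overline{z}_p$, with $j$ marking the first route whose free-flow time $\tau_j^F$ reaches $\tau_u^S$ and is therefore too costly to use. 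The boundary sub-case $\tau_j^F=\tau_u^S$ is where route $j$ can be used in free flow at that very travel time, yielding the one-parameter family \eqref{ptf1} in which the residual is split between the overloaded route $u$ and the free-flow route $j$; the split is fully transferring exactly when $u$ sits at capacity, $\Phi R_u^W=\overline{z}_u$. Each sub-case is closed by checking density admissibility through Proposition~\ref{prop:shapeidv} and then \eqref{wc}.

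I expect the \textbf{main obstacle} to be the feasibility half of fact~(ii): proving that the common travel time can be realized \emph{simultaneously} on all used routes, i.e.\ that congesting every used route $p<j$ up to the equilibrium value stays within its feasible band $[\tau_p^F,\tau_p^S]$, so that each density $\tau_p^{-1}(\tau^*)$ is well defined. This is exactly the point at which the pinning route must be identified correctly: the ceiling is the \emph{smallest} congested travel time among the activated routes, and one must verify that no cheaper used route has an even lower $\tau_p^S$ that would be violated -- this is the delicate content behind the set $U$ and the index $u$. Establishing this, together with uniqueness, is the crux. Uniqueness itself I would derive from the strict monotonicity of the cumulative-capacity sweep in $\tau^*$: Assumption~\ref{ass3} rules out ties and pins a single $\tau^*$ and a single saturation pattern, so the flow vector is forced and the densities are fixed by $\tau_p^{-1}(\tau^*)$, except in the boundary sub-case $\tau_j^F=\tau_u^S$, where the degeneracy is exactly the stated interval for $\Phi R_u^W$. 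The remaining verifications are routine substitutions into Proposition~\ref{prop:shapeidv} and the travel-time definitions.
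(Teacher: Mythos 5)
Your proposal is correct and follows essentially the same route as the paper: your structural facts (i) and (ii) are exactly the content of Lemmata~\ref{lem:aux1}--\ref{lem:aux2} and of the in-proof observation that an overloaded route $q\neq u$ would force $\tau_q(x^W)=\tau_q^S>\tau_u^S$, and your case split on whether the sweep reaches $\tau_k^F$ before hitting the smallest ceiling $\tau_p^S$ reproduces the paper's three cases $U=\emptyset$, $\tau_j^F>\tau_u^S$, and $\tau_j^F=\tau_u^S$. The upward sweep in the common travel time $\tau^*$ is a clean repackaging of the same argument rather than a genuinely different method, so no further comparison is needed.
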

    Before proving the theorem, we provide the reader with some intermediate results.
\begin{lem}
    Suppose $(R^W,x^W)$ is a WE of $(\mathcal{G},\Phi)$. If $R^W_p>0$, then $\Phi R^W_q\geq\overline{z}_q,\ \forall q<p$.
    \label{lem:aux1}
\end{lem}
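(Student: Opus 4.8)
The plan is to argue by contradiction, exploiting the equilibrium inequality \eqref{wc} together with the route ordering of Assumption \ref{ass3}. Fix a used route $p$ (so that $R_p^W>0$) and an arbitrary route $q<p$, and suppose toward a contradiction that $\Phi R_q^W<\overline{z}_q$, i.e.\ that $q\in P_F$. I will show that this forces $\tau_q(x^W)<\tau_p(x^W)$, which violates the equilibrium condition and yields the claim.

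The two facts that drive the argument are the following. First, the free-flow route travel time is a universal lower bound: since each link satisfies $\tau_l(x_l)\geq\tau_l^F$ (with equality exactly in free-flow), summing over the links of route $p$ gives $\tau_p(x^W)\geq\tau_p^F$ for every consistent density vector, irrespective of whether $p$ belongs to $P_F$, $P_C$ or $P_S$. Second, by Proposition \ref{prop:shapeidv} and the Remark that follows it, a route in $P_F$ has all of its links in free-flow regime, so its travel time equals exactly $\tau_q^F$. Combining these two facts with the ordering $\tau_q^F<\tau_p^F$ guaranteed by Assumption \ref{ass3} (valid precisely because $q<p$) yields
\begin{equation*}
    \tau_q(x^W)=\tau_q^F<\tau_p^F\leq\tau_p(x^W),
\end{equation*}
contradicting the requirement $\tau_p(x^W)\leq\tau_q(x^W)$ imposed by \eqref{wc} for the used route $p$. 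Hence $\Phi R_q^W\geq\overline{z}_q$ for every $q<p$.

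The only delicate point I anticipate is the boundary case $R_q^W=0$, in which the density and flow of route $q$ both vanish and the link travel time $\tau_l(x_l)=L_lx_l/f_l(x_l)$ takes the indeterminate form $0/0$. I would handle this by adopting the natural continuity convention $\tau_l(0)=\tau_l^F$, so that an empty route still carries its free-flow travel time; this is exactly the value a marginal vehicle would perceive, and it is consistent with treating an unused route as an element of $P_F$ when applying \eqref{wc}. With this convention in place the displayed chain of inequalities covers $R_q^W=0$ as well, and the conclusion $\Phi R_q^W\geq\overline{z}_q$ in particular certifies that every route strictly more efficient than a used route must itself be used and at least saturated. Beyond fixing this convention the argument is essentially immediate, so I do not foresee any substantial obstacle.
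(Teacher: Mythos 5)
Your argument is correct and coincides with the paper's own proof: both proceed by contradiction, noting that $\Phi R_q^W<\overline{z}_q$ places route $q$ in $P_F$ so that $\tau_q(x^W)=\tau_q^F<\tau_p^F\leq\tau_p(x^W)$, contradicting \eqref{wc}. Your explicit handling of the $R_q^W=0$ case via the convention $\tau_l(0)=\tau_l^F$ is a reasonable clarification the paper leaves implicit, but it does not change the approach.
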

\begin{proof}
    By contradiction, assume that $\Phi R^W_q<\overline{z}_q$ for some $q<p$. Then, for any consistent density vector $x^W$ of $R^W$,  $\tau_q(x^W)=\tau_q^F<\tau_p^F\leq\tau_q(x^W)$, which contradicts \eqref{wc}.
\end{proof}
\begin{lem}
    Suppose $(R^W,x^W)$ is a WE of $(\mathcal{G},\Phi)$. Then, $\mathrm{supp}(R^W)\subseteq\{1,\dots,k\}$.
    \label{lem:aux2}
\end{lem}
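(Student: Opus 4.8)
The plan is to argue by contradiction, exploiting the defining inequality of $k$, namely $\sum_{j=1}^{k}\overline{z}_j\geq\Phi$, together with the saturation property provided by Lemma~\ref{lem:aux1}. First I would suppose that $\mathrm{supp}(R^W)\not\subseteq\{1,\dots,k\}$, so that there exists an index $p>k$ with $R_p^W>0$, and aim to derive a violation of flow conservation.

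Since $R_p^W>0$, Lemma~\ref{lem:aux1} applies and gives $\Phi R_q^W\geq\overline{z}_q$ for every $q<p$. Because $p>k$, each index $q\in\{1,\dots,k\}$ satisfies $q\leq k\leq p-1<p$, so in particular $\Phi R_q^W\geq\overline{z}_q$ holds for all $q=1,\dots,k$. Summing over the first $k$ routes and invoking the definition of $k$ then yields
\begin{equation*}
    \sum_{q=1}^{k}\Phi R_q^W\;\geq\;\sum_{q=1}^{k}\overline{z}_q\;\geq\;\Phi.
\end{equation*}

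To close the argument I would use that $R^W$ is a routing vector, so $\sum_{q=1}^{N}\Phi R_q^W=\Phi$. Combining this with the previous display shows that routes $1,\dots,k$ already carry the entire exogenous flow, whence
\begin{equation*}
    \sum_{q=k+1}^{N}\Phi R_q^W\;=\;\Phi-\sum_{q=1}^{k}\Phi R_q^W\;\leq\;0.
\end{equation*}
As every term $\Phi R_q^W$ is nonnegative, this forces $R_q^W=0$ for all $q>k$, contradicting $R_p^W>0$ and giving $\mathrm{supp}(R^W)\subseteq\{1,\dots,k\}$. I expect no serious obstacle here: the only point requiring care is verifying that the hypothesis $q<p$ of Lemma~\ref{lem:aux1} indeed covers all of $\{1,\dots,k\}$ once $p>k$, after which the conclusion is a one-line counting argument on the conserved total flow $\Phi$.
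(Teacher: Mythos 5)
Your proposal is correct and follows essentially the same route as the paper: a contradiction argument that takes some $p>k$ in the support, invokes Lemma~\ref{lem:aux1} to get $\Phi R_q^W\geq\overline{z}_q$ for all $q\leq k$, and then clashes this with the defining inequality $\sum_{j=1}^{k}\overline{z}_j\geq\Phi$. The only difference is that you spell out explicitly the flow-conservation counting that the paper compresses into the phrase ``which contradicts the definition of $k$,'' which is a useful clarification but not a different argument.
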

\begin{proof}
    By contradiction, suppose that $\max\mathrm{supp}(R^W)>k$. By Lemma \ref{lem:aux1}, it should be that $\Phi R^W_q\geq\overline{z}_q,\ \forall q<\max\mathrm{supp}(R^W)$, which contradicts the definition of $k$.
\end{proof}

We are now ready to provide the proof of Theorem \ref{thm:we}.
\begin{proof}
    Lemmata \ref{lem:aux1} and \ref{lem:aux2} imply that any WE of the game has support of the form $\{1,\dots,p\},\ p\leq k$. We split the proof into three parts: the first part is dedicated to characterize the WE of $(\mathcal{G},\Phi)$ when $U=\emptyset$, the second one addresses the case $U\neq\emptyset$ and $\tau_j^F>\tau_u^S$, and the third one the case $U\neq\emptyset$ and $\tau_j^F=\tau_u^S$. 
    \begin{enumerate}
        \item $U=\emptyset$: in this case, there cannot be any routes such that $\Phi R_q^W>\overline{z}_q$, as this would imply that 
        \begin{equation*}
            \tau_q(x_q^W)=\tau_q^S>\tau_k^F,\quad \forall q<k,
        \end{equation*}
        contradicting the Wardrop condition \eqref{wc}. This also implies that 
        $\mathrm{supp}(R^W)=\{1\dots,k\}$, as if $\mathrm{supp}(R^W)=\{1\dots,p\}$, with $p<k$, them by definition of $k$, there should exist $q\in\{1\dots,p\}$ such that $\Phi R_q^W>\overline{z}_q$. 
        By combining these facts with Lemma~\ref{lem:aux1}, it becomes straightforward that the only possible traffic assignment $(R^W,x^W)$ satisfying to the Wardrop condition \eqref{wc} is that in \eqref{ftw}. Clearly, the traffic assignment in \eqref{ftw} is fully transferring.
        
        \item $U\neq\emptyset$, $\tau_j^F>\tau_u^S$: we start by observing that $\mathrm{supp}(R^W)\subseteq\{1\dots,j-1\}$, as all routes $q\in\{j,\dots,k\}$ have free-flow travel time greater than $\tau_u^S$. Observe also that there cannot be any routes such that $\Phi R_q^W>\overline{z}_q,\ q\in\{1\dots,j-1\}\setminus u$. In fact, as $u=\min U$ and all maximum route travel time are distinct, it holds that
        \begin{equation*}
            \tau_u^S=\min_p\tau_p^S.
        \end{equation*}
        Thus, $\Phi R_q^W>\overline{z}_q,\ q\in\{1\dots,j-1\}\setminus u$, would imply 
        \begin{equation*}
            \tau_q(x_q^W)=\tau_q^S>\tau_u^S,
        \end{equation*}
        violating the Wardrop condition \eqref{wc}.
        These facts, combined with Lemma~\ref{lem:aux1}, imply that the only possible traffic assignment $(R^W,x^W)$ satisfying to the Wardrop condition \eqref{wc} is that in \eqref{ptf2}.
        
       \item $U\neq\emptyset$, $\tau_z^F=\tau_u^S$: analogously to the previous case,  $\mathrm{supp}(R^W)\subseteq\{1\dots,j\}$, as all routes $q\in\{j+1,\dots,k\}$ have free-flow travel time greater than $\tau_u^S$, and there cannot be any routes such that $\Phi R_q^W>\overline{z}_q,\ q\in\{1\dots,j\}\setminus u$, as it would result in contradicting the Wardrop condition \eqref{wc}. By combining these facts with Lemma~\ref{lem:aux1}, it follows that all traffic assignments $(R^W,x^W)$ that take the form in \eqref{ptf1} satisfy to the Wardrop condition \eqref{wc}. As all such routing vectors satisfy to $\Phi R_u^W\geq\overline{z}_u$ and $\Phi R_j^W\leq\overline{z}_j$, they attain the maximum travel time on route $u$ and the free-flow travel time on route $j$. Among these traffic assignments, it is straightforward to see that the only one which is fully transferring is the one associated with the case $j=k$ and such that $\Phi R_u^W=\overline{z}_u$.
    \end{enumerate}
\end{proof}

Theorem~\ref{thm:we} highlights a potential drawback of selfish routing: \emph{partially transferring Wardrop equilibria}. Even when the network is subject to an exogenous flow smaller than its min-cut capacity, users' selfish behavior can lead to traffic assignments that only partially transfer the exogenous demand. In a sense, we might think of this as selfish routing reducing the effective capacity of the network, as vehicles would never use routes that are sub-optimal in terms of travel time. Because all users aim for the shortest travel time routes and share the same queue before entering the network, the exogenous flow may be accommodated only partly, leading to congestion at the origin.
In the following, we characterize the exact amount of exogenous flow loss due to partial demand transfer.

\begin{corol}
    Consider a partially transferring \emph{WE} $(R^W,x^W)$. Let us indicate $\Psi$ the amount of non-transferred exogenous flow. Then:
    \begin{itemize}
        \item if $(R^W,x^W)$ takes the form in \eqref{ptf2}, then 
        \begin{equation}
            \Psi=\Phi-\sum_{p=1}^{j-1}\overline{z}_p;
        \end{equation}
        \item if $(R^W,x^W)$ takes the form in \eqref{ptf1}, then:
        \begin{itemize}
            \item if $j<k$, then 
            \begin{equation}
                \Psi\in\left[\Phi-\sum_{p=1}^{j}\overline{z}_p,\Phi-\sum_{p=1}^{j-1}\overline{z}_p\right];
            \end{equation}
            \item if $j=k$, then 
            \begin{equation}
                \Psi\in\left[0,\Phi-\sum_{p=1}^{k-1}\overline{z}_p\right];
            \end{equation}
        \end{itemize}
    \end{itemize}
    \label{cor31}
\end{corol}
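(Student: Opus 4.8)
The plan is to write $\Psi$ as the difference between the total exogenous inflow $\Phi$ and the total flow that is actually routed to the destination, and then to evaluate the routed flow route by route using Proposition~\ref{prop:shapeidv}. That proposition tells us that a route $p\in P_F\cup P_C$ carries exactly its assigned share $\Phi R_p^W$ (its consistent density vector has uniform flow $\Phi R_p^W$), whereas a route $p\in P_S$ carries only $\overline{z}_p$; in all cases the routed flow on $p$ equals $\min\{\Phi R_p^W,\overline{z}_p\}$. Since $\sum_p\Phi R_p^W=\Phi$, we obtain
\[
    \Psi=\Phi-\sum_{p=1}^N\min\{\Phi R_p^W,\overline{z}_p\}=\sum_{p\,:\,\Phi R_p^W>\overline{z}_p}\bigl(\Phi R_p^W-\overline{z}_p\bigr),
\]
so the whole computation reduces to identifying which used routes exceed their capacity and by how much.

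The first step is to observe that, in each of the WE forms given by Theorem~\ref{thm:we}, route $u$ is the only route that can exceed its capacity. Indeed, for both \eqref{ptf2} and \eqref{ptf1} every route $p\in\{1,\dots,j-1\}\setminus\{u\}$ satisfies $\Phi R_p^W=\overline{z}_p$ and hence lies in $P_C$; in \eqref{ptf1} route $j$ satisfies $\Phi R_j^W\le\overline{z}_j$ and so lies in $P_F\cup P_C$; all remaining routes are unused and trivially in $P_F$. Moreover every listed form satisfies $\Phi R_u^W\ge\overline{z}_u$. Therefore the sum above has at most one term and
\[
    \Psi=\Phi R_u^W-\overline{z}_u,
\]
which is $0$ precisely when route $u$ is also at capacity and strictly positive otherwise. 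It now suffices to insert the value, or range, of $\Phi R_u^W$ prescribed by the theorem.

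For the form \eqref{ptf2}, $\Phi R_u^W=\Phi-\sum_{p=1,\,p\neq u}^{j-1}\overline{z}_p$ is a single value, and subtracting $\overline{z}_u$ (equivalently, restoring the term $p=u$ into the sum) yields $\Psi=\Phi-\sum_{p=1}^{j-1}\overline{z}_p$. For the form \eqref{ptf1}, $\Phi R_u^W$ ranges over $\bigl[\Phi-\sum_{p=1,\,p\neq u}^{j}\overline{z}_p,\ \Phi-\sum_{p=1,\,p\neq u}^{j-1}\overline{z}_p\bigr]$; performing the same subtraction on both endpoints gives, when $j<k$, the interval $\bigl[\Phi-\sum_{p=1}^{j}\overline{z}_p,\ \Phi-\sum_{p=1}^{j-1}\overline{z}_p\bigr]$. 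Here the lower endpoint is genuinely attained because, for $j<k$, the definition of $k$ guarantees $\Phi-\sum_{p=1}^{j}\overline{z}_p>0$.

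The step I expect to be the main obstacle is the lower endpoint in the case $j=k$. Carrying out the same substitution would produce the lower bound $\Phi-\sum_{p=1}^{k}\overline{z}_p$, which is non-positive since the very definition of $k$ gives $\Phi-\sum_{p=1}^{k}\overline{z}_p\le0$; a negative value of $\Psi$ is meaningless. The resolution is that the admissible range of $\Phi R_u^W$ in \eqref{ptf1} must be intersected with the feasibility constraint $\Phi R_u^W\ge\overline{z}_u$ that keeps route $u$ at least at capacity. Because $\Phi-\sum_{p=1,\,p\neq u}^{k}\overline{z}_p=\bigl(\Phi-\sum_{p=1}^{k}\overline{z}_p\bigr)+\overline{z}_u\le\overline{z}_u$, this constraint is the binding one, the effective minimum of $\Phi R_u^W$ is $\overline{z}_u$, and there $\Psi=0$. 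The upper endpoint is unaffected, so $\Psi$ ranges over $\bigl[0,\ \Phi-\sum_{p=1}^{k-1}\overline{z}_p\bigr]$, which is the final claim.
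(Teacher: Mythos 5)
Your proof is correct and follows the route the paper intends: the corollary is stated without proof as an immediate consequence of Theorem~\ref{thm:we}, and your computation $\Psi=\sum_{p:\,\Phi R_p^W>\overline{z}_p}(\Phi R_p^W-\overline{z}_p)=\Phi R_u^W-\overline{z}_u$ (using that only route $u$ can exceed its capacity in the equilibrium forms \eqref{ptf2} and \eqref{ptf1}) followed by substitution of the prescribed value or range of $\Phi R_u^W$ is exactly the intended derivation. Your handling of the $j=k$ case, where the stated interval for $\Phi R_u^W$ must be intersected with the consistency requirement $\Phi R_u^W\geq\overline{z}_u$ to obtain the lower endpoint $0$, is the one genuinely non-mechanical step, and you resolve it correctly.
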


\begin{ex}\label{ch:ctm:ex2}
    Consider the network in Figure \ref{fig:example} with capacities, jam densities and speeds as in \eqref{param}. Suppose also that link lengths are as follows:
    \begin{equation*}
        L=\left(1,1,0.5,2,2,2,2\right).
        \label{ch:ctm:eql}
    \end{equation*}
    Suppose that $\Phi=1000$, so that Assumption \ref{ass:ub} is satisfied. In this case, $k=1$ and $U=\emptyset$, so the unique \emph{WE} $(R^W,x^W)$ is
    \begin{equation*}
        R^W=\left(1,0\right),\quad x^W=\left(25,25,25,0,0,0,0\right).
    \end{equation*}
    Now, assume that $\Phi=1500$, which still satisfies to Assumption \ref{ass:ub}. In this other case, $k=2$, but $U=\{1\}$, since
    \begin{equation*}
        \tau_1^S=11.25\text{ min }<12\text{ min }=\tau_2^F.
    \end{equation*}
    As a result, the unique \emph{WE} of the game is the following partially transferring traffic assignment:
    \begin{equation*}
        R^W=(1,0),\quad x^W=\left(87.5,87.5,25,0,0,0,0\right).
    \end{equation*}
    The amount of non-transferred flow $\Psi$ amounts to $500$. 
\end{ex}

    Wardrop equilibria are said to be \emph{essentially unique} when they all share the same minimum travel time \cite{rough}. Theorem \ref{thm:we} implies that the game $(\mathcal{G},\Phi)$ exhibits essential uniqueness. Specifically, when \( U = \emptyset \), the WE is unique. When \( U \neq \emptyset \), if \( \tau_j^F > \tau_u^S \), the WE is unique; however, if \( \tau_j^F = \tau_u^S \), the WE is not unique, but all WEs have the same travel time.

\begin{rem}
    Assumptions~\ref{ass:ub} and \ref{ass3} were made to simplify the analysis of the Wardrop Equilibria (WEs) of $(\mathcal{G},\Phi)$. Assumption~\ref{ass:ub} certainly limits the generality of the model. Without Assumption~\ref{ass:ub}, routes can be characterized by multiple minimal capacity links. In this more general case, routes would have multiple bottlenecks, and the categories of valid density vectors for routes in sets $P_C$ and $P_S$ would become richer, encompassing a wider variety of congestion patterns. On the other hand, Assumption~\ref{ass3} imposes minimal limitations on the set of parameters. We underscore that these two assumptions allow for capturing the problem of partial demand transfer and are not the cause of it. As we will show in one of the next sections with an example, this issue also presents in networks where these two assumptions do not hold.
    
\end{rem}

\begin{rem}[Comparison with \cite{krichene2014}]
As mentioned in Section~\ref{sec:related}, a non-atomic selfish routing game relying on a description of the traffic state based both on density and flow, accounting for capacity constraints and congested traffic regimes has already been proposed in \cite{krichene2014}, but that model does not include a supply and demand mechanism. This leads to two important differences. First, our model exhibits essential uniqueness, whereas that model does not. Second, in some cases, that model does not admit a WE for certain values of exogenous flow, even when the latter is less than the min-cut capacity of the network. In contrast, our model admits a WE for any possible exogenous flow.
\end{rem}

\subsection{Social optimum}
In general, a \emph{social optimum} is an assignment minimizing some system cost. Here, we provide a definition of social optimum that accounts for both the minimization of the total travel time over the network and the full transfer of the exogenous flow~$\Phi$.
\begin{defi}
    Given an exogenous flow $\Phi$ satisfying to Assumptions \ref{ass:ub} and \ref{mincut}, a \emph{social optimum} (SO) of the game $(\mathcal{G},\Phi)$ is a traffic assignment $(R^O,x^O)$  such that
\begin{equation}
    \begin{aligned}
(R^O,x^O)=\argmin_{z,x} & \sum_{p=1}^N\Phi R_p\tau_p(x)\\
\textrm{s.t.} \quad & x\in C(R),\\
  &\Phi R_p\leq\overline{z}_p, \\
  &\sum_{p=1}^NR_p=1.
  \end{aligned}
  \label{so}
\end{equation}
    \end{defi}
\noindent We prove  that, in our setting, there exists a unique SO. 
\begin{prop}
    Suppose that Assumption \ref{ass:ub} is satisfied. Then, the NRG $(\mathcal{G},\Phi)$ admits a unique SO $(R^O,x^O)$, whose expression is as follows:
    \begin{equation}
           \begin{aligned}
               &\Phi R^O=\left(\overline{z}_1,\dots,\overline{z}_{(k-1)},\Phi-\sum_{p=1}^{k-1}\overline{z}_p,0\dots,0\right),\\
               &x_p^O=\left(\frac{\overline{z}_p}{v_{1_p}},\dots,\frac{\overline{z}_p}{v_{n_p}}\right),\quad p=1,\dots,k-1,\\
               &x_k^O=\left(\frac{\Phi-\sum_{p=1}^{k-1}\overline{z}_p}{v_{1_k}},\dots,\frac{\Phi-\sum_{p=1}^{k-1}\overline{z}_p}{v_{n_k}}\right).
           \end{aligned}
           \label{soex}
        \end{equation}
\end{prop}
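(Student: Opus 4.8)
The plan is to reduce the joint minimisation over $(R,x)$ in \eqref{so} to a linear program in the route flows $f_p:=\Phi R_p$, solve it as a continuous knapsack, and then recover the densities. The constraint $\Phi R_p\le\overline{z}_p$ forces $P_S=\emptyset$, so by Proposition~\ref{prop:shapeidv} every route with $R_p>0$ lies in $P_F\cup P_C$ and is fully transferring. Because the routes are parallel and non-intersecting, $C(R)$ factorises as a product over routes, and the cost $\sum_p\Phi R_p\tau_p(x)$ splits into independent per-route contributions $\Phi R_p\tau_p(x_p)$.

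First I would fix a feasible $R$ and minimise over $x\in C(R)$ route by route. For $p\in P_F$ the consistent density vector is unique and in free flow, so $\tau_p(x_p)=\tau_p^F$ is forced. For $p\in P_C$ I would show that the all-free-flow density $x_{l_p}=\overline{z}_p/v_{l_p}$ is consistent (the bottleneck $b_p$ sits exactly at critical density, every other link strictly below it, so the flow equals $\overline{z}_p$ throughout) and that it is the \emph{unique} consistent density attaining the minimal value $\tau_p^F$: any link in congested regime contributes strictly more than its free-flow time, and a critical link other than $b_p$ would have to be balanced by a congested downstream link, again raising the route time. Hence, for every feasible $R$, the cost-minimising density gives $\tau_p(x_p)=\tau_p^F$ on each used route, and \eqref{so} collapses to
\begin{equation*}
\min_{f}\ \sum_{p=1}^N f_p\,\tau_p^F\quad\text{s.t.}\quad 0\le f_p\le\overline{z}_p,\ \ \sum_{p=1}^N f_p=\Phi .
\end{equation*}

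Next I would solve this continuous knapsack. Assumption~\ref{mincut} guarantees feasibility, and the strict ordering $\tau_1^F<\dots<\tau_N^F$ of Assumption~\ref{ass3} makes the greedy allocation optimal and unique: if some mass sat on a route $q$ while a strictly cheaper route $p<q$ had spare capacity, shifting flow from $q$ to $p$ would strictly lower the cost. The optimal $f$ therefore saturates the cheapest routes first, yielding $f_p=\overline{z}_p$ for $p<k$, $f_k=\Phi-\sum_{p<k}\overline{z}_p$ and $f_p=0$ for $p>k$, exactly the routing vector in \eqref{soex}; distinctness of the $\tau_p^F$ excludes ties and gives uniqueness. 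Combining this unique $R^O$ with the unique cost-minimising free-flow densities from the previous step reproduces the $x^O$ in \eqref{soex}.

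I expect the main obstacle to be the decoupling step, specifically the treatment of $P_C$. There the flow equals capacity and Proposition~\ref{prop:shapeidv} admits a whole family of consistent density vectors with congested links, so one must argue carefully both that the free-flow density is admissible and that no other consistent density can match $\tau_p^F$. Once the density choice is pinned to free flow independently of the routing, the remaining knapsack argument is routine.
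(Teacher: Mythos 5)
Your proof is correct and follows essentially the same route as the paper's: argue that at the optimum every used route must be entirely in free flow, so the problem collapses to the linear program $\min\sum_p \Phi R_p\tau_p^F$ over $0\le\Phi R_p\le\overline{z}_p$, $\sum_p R_p=1$, which the strict ordering of the $\tau_p^F$ solves uniquely by greedy saturation. Your treatment is somewhat more careful than the paper's on the one point it glosses over — namely that for routes in $P_C$ the free-flow consistent density both exists and is the unique one attaining $\tau_p^F$ — but this is a refinement of the same argument, not a different one.
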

\begin{proof}
    Suppose that (${R^O}'$,${x_p^O}'$) is a social optimum of the NRG and suppose that ${R^O}'>0$. It is straightforward that every link of route $p$ is in free-flow. In fact, since $\Phi{R^O}'\leq\overline{z}_p$, $p\in F\cup S$, which means ${R^O}'$ admits a consistent flow vector such that all links are in free-flow. Hence, ${x_p^O}'$ cannot present saturated links, as otherwise would not be minimizing the cost in \eqref{so}. Then, \eqref{so} reduces to
    \begin{equation*}
    \begin{aligned}
(R^O,x^O)=\argmin_{z,x} & \sum_{p=1}^N\Phi R_p\tau_p^F\\
\textrm{s.t.} \quad & x_p=\left(\frac{\Phi R_{p}}{v_{l_p}}\right)_{l_p=1_p}^{n_p},\ p=1,\dots,N,\\
  &\Phi R_p\leq\overline{z}_p. \\
  &\sum_{p=1}^NR_p=1.
  \end{aligned}
\end{equation*}
It follows immediately that the unique SO is the one using the first $k$ routes as in \eqref{soex}.
\end{proof}
One of the measure most commonly used to quantify the inefficiency of WEs in routing games is the Price of Anarchy (PoA) \cite{rough}. The PoA of a WE corresponds to the total travel time realized by the WE and the minimum total travel time achievable, the one realized by the SO: 
\begin{equation}
    \poa(R^W,x^W)=\frac{\sum_{p=1}^N\Phi R_p^W\tau_p(x^W)}{\sum_{p=1}^N\Phi R_p^O\tau_p(x^O)}.
\end{equation}
In our model the PoA turns out not to be the most appropriate measure of inefficiency. In fact, for partially transferring WEs, the PoA loses its significance, as the WE is transferring a flow less than that transferred by the SO. In this case, a WE might even realize a total travel time smaller than the SO, but this comes from the fact that the WE is transferring less flow.
On the other hand, when $(R^W,x^W)$ is fully transferring, the PoA is well-defined and takes the following form:
\begin{equation*}
    \poa(R^W,x^W)=\frac{\Phi\cdot\tau_k^F}{\sum_{p=1}^{k-1}\overline{z}_p\cdot\tau_p^F+\left(\Phi-\sum_{p=1}^{k-1}\overline{z}_p\right)\tau_k^F}\geq1.
\end{equation*}

Another interesting fact to remark is that if the WE of the NRG is fully transferring, then the WE and the SO share the same routing vector, i.e., $R^W=R^O$ (see \eqref{ftw} and \eqref{soex}). As this might sound contradictory, let us discuss it more in detail.
\begin{ex}\label{ch:ctm:ex3}
    Consider the network in Figure \ref{fig:example} with capacities, jam densities and speeds as in \eqref{param} and link lengths
    \begin{equation*}
        L=\left(1.5,1.5,1.5,2,2,2,2\right).
    \end{equation*}
    as in \eqref{ch:ctm:eql}. Assume that $\Phi=1500$, so that Assumption \ref{ass:ub} is satisfied. The \emph{WE} in this case is unique and corresponds to  
    \begin{equation*}
        R^W=\left(\frac{2}{3},\frac{1}{3}\right),\ \ x^W=\left(25,83.3,25,12.5,12.5,12.5,12.5\right).
        \label{rxwex}
    \end{equation*}
    Indeed, such traffic assignment implies that the two route travel times satisfy two
    \begin{equation}
        \tau_1(x^W)=\tau_2(x^W)= 12\text{ min}.
    \end{equation}
    On the other hand, the \emph{SO} corresponds to 
    \begin{equation*}
        R^O=\left(\frac{2}{3},\frac{1}{3}\right),\quad x^O=\left(25,25,25,12.5,12.5,12.5,12.5\right).
    \end{equation*}
    In this case, 
    \begin{equation*}
        \poa(R^W,x^W)=\frac{24}{17}
    \end{equation*}
    The \emph{SO} fully transfer the whole exogenous demand, while also minimizing the total travel time, keeping all used routes in free-flow regime. We can provide the following explanation to this phenomenon. At WE, each user selfishly chooses their route to minimize their own travel time. This selfish behavior leads to a density vector $x^W$ as given in \eqref{rxwex}. Consequently, the flow at the origin is split between the two roads in a way that results in the routing vector $R^W$. Conversely, at SO, the objective is to minimize the overall travel time for all users. A central planner determines the optimal routing vector $R^O$, which results in a specific density vector $x^O$. The density vector $x^O$ ensures that all traffic routes used are in the free-flow regime, meaning they are not congested.
\end{ex}

Therefore, even though the two routing vectors coincide, \( R^W \) can be seen as the routing vector induced by the Wardrop condition to ensure that the used routes have the same travel time, while \( R^O \) is the routing vector that induces an optimal utilization of the network.

\section{Beyond parallel networks}\label{ch:ctm:beyond}
In the previous sections, we analyzed selfish routing on parallel networks. This section aims to provide an example showing that selfish routing can cause the same type of issues, such as partial demand transfer, in more complex network topologies beyond parallel networks. 
\begin{figure}
    \centering
    \includegraphics[width=0.8\linewidth]{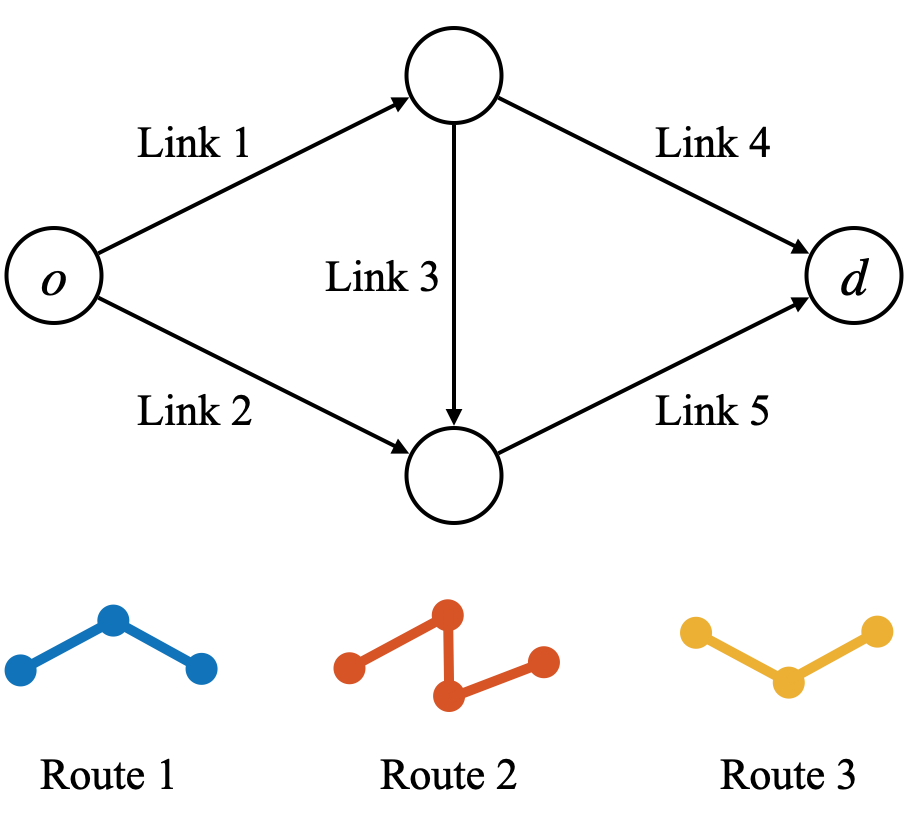}
    \caption{Wheatstone's network.}
    \label{wheat2}
\end{figure}
Consider the network in Figure~\ref{wheat2} and suppose that the network geometry is the following:
\begin{equation*}
\begin{aligned}
    &\overline{f}=\left(1500,1500,800,1500,1500\right),\\
    &v_l=40,\quad \forall l \in \mathcal{L},\\
    &\overline{x}=(187.5,187.5,100,187.5,187.5),\\
    &L=(8,16,4,16,8).
\end{aligned}   
\end{equation*}

Suppose that the network is subject to an exogenous flow $\Phi=1600$. A WE for this network is given by
\begin{equation*}
    R^W=\left(0,\frac{9}{16},\frac{7}{16}\right),\qquad x^W=\left(107.5,51.41,20,0,37.5\right),
\end{equation*}
where the routing vector $R^W$ represents the fraction of exogenous flow allocated on each route.
From the expression of $x^W$, one can see that both link $1$ and $2$ are in congested regime. The travel times of the used routes, Route $2$ and Route $3$, is $1$ h $23$ min, approximately. The travel time of the unused route, Route $1$, is $1$ h $29$ min, instead. Then, one can notice that $(R^W,x^W)$ is a partially transferring. Indeed, from \eqref{ch:ctm:supp}, the supply of link one is exactly $800$ veh/h. As the fraction of exogenous flow aiming to use Route $2$ must pass through link $1$, it is clear that the exogenous flow cannot be fully accommodated. Also in this case, users' selfish behavior leads to an inefficient traffic pattern that causes partial demand transfer.

This example demonstrates that partial demand transfer is a fundamental issue of selfish routing. Moreover, its occurrence is not limited to parallel networks but can also arise in more complex network topologies.


\section{Concluding remarks}\label{ch:ctm:sec:conclusion}

The main contribution of this paper lies in the analysis of the selfish routing model in a network subject to supply and demand constraints on its links, inspired by Daganzo's cell transmission model. This approach effectively characterizes the congestion phenomena typical of traffic networks. Our analysis highlights that the issues associated with selfish routing extend beyond a mere reduction in traffic efficiency in terms of total travel time. We have indeed demonstrated that selfish routing can lead to sub-optimal utilization of the road network's capacity. Even when the network is subject to an exogenous flow less than its min-cut capacity, which can theoretically be fully transferred across the network, the traffic distribution caused by the selfish behavior of users results in only part of the traffic being transferred, leaving part of the exogenous flow unserved at the network's origin.

This study opens several avenues for further research. The first potential extension involves applying the model to more complex network topologies beyond the family of parallel networks. This would significantly enhance the model's applicability to real-world scenarios. The main challenge in generalizing to arbitrary networks lies in computing the Wardrop equilibria. In the current setting, we found that these equilibria can be written in close form, and an algorithm for their computation is straightforward. Certainly, this is not the case for more complex networks. Therefore, a primary future objective will be to determine if the calculation of Wardrop equilibria can be framed as an optimization problem similar to how Wardrop equilibria are calculated in the classical routing games formulation \cite{patriksson}.

A second important extension involves analyzing scenarios where the management of exogenous flow at the network's origin differs from what we have considered. Our assumption of a single origin for the exogenous flow implies that users feeds into a single queue before entering the network. This implies that players aiming for different routes will accumulate at this common queue, independent of the route they aim for. While this situation may correspond to certain real-world scenarios, there are also cases that fall outside this framework and would be better modeled if each route had its own queue, i.e., the entry to one road does not depend on the others. In such cases, the travel time for each route should account for the waiting time in the queue to enter that route. In this case, the problem of partial demand transfer would probably be mitigated, as excessively long waiting times for one route would prompt users to consider alternative routes.

Future work should also aim to encompass heterogeneity,  so as to account for users with different levels of information or preferences, and mixed behaviors, to capture the presence of user classes that act coordinately. 

A further extension of the model involves its dynamization. As the current model is entirely static, making it unclear whether traffic dynamics actually converge to these traffic assignments. To address this, we need to design dynamic network flows based on CTM principles, similar to the approaches used in previous works by \cite{coogan2015,lovisari}. However, it is crucial to incorporate routing policies that reflect the selfish behavior of users.

\bibliographystyle{IEEEtran} 
\bibliography{biblio} 

\vskip 0pt plus -1fil

\begin{IEEEbiography}[{\includegraphics[width=1in,height =1.25in,clip,keepaspectratio]{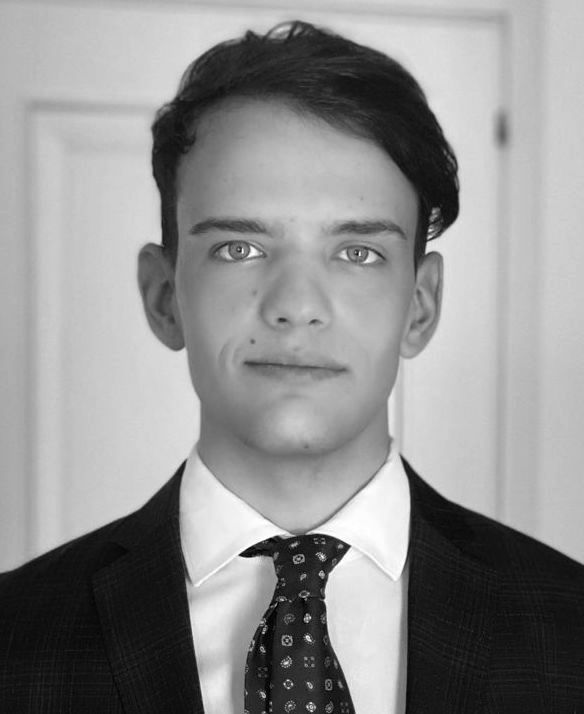}}]{Tommaso Toso} Tommaso Toso (Student Member, IEEE) received the B.S. degree in Mathematics for Engineering, M.S. degree in Mathematical Engineering (cum Laude), in 2019 and 2021, respectively, from the Politecnico di Torino, Turin, Italy. He is curretly a Ph.D. student at the GIPSA-Lab, Grenoble, France. His research focuses on dynamics and control in network systems, with applications to transportation networks.
\end{IEEEbiography}
\vskip 0pt plus -1fil

\begin{IEEEbiography}[{\includegraphics[width=1in,height =1.25in,clip,keepaspectratio]{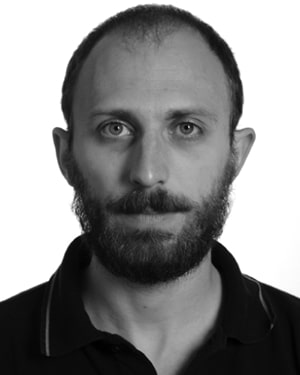}}]{Paolo Frasca} Paolo Frasca (Senior Member, IEEE) received the Ph.D. degree from the Politecnico di Torino, Turin, Italy, in 2009. 
He was an Assistant Professor with the University of Twente, Enschede, Netherlands, from 2013 to 2016. Since October 2016, he has been a CNRS Researcher at GIPSA-Lab, Grenoble, France. His research interests cover the theory of control systems and networks, with main applications in infrastructural and social networks. On these topics, he has (co)authored more than 50 journal publications. He has been Associate Editor for several conferences and journals, including the International Journal of Robust and Nonlinear Control, the IEEE Control Systems Letters, the Asian Journal of Control, and Automatica.
\end{IEEEbiography}
\vskip 0pt plus -1fil

\begin{IEEEbiography}[{\includegraphics[width=1in,height =1.25in,clip,keepaspectratio]{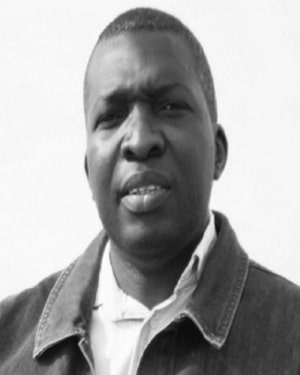}}]{Alain Y. Kibangou} Alain Y. Kibangou (Member, IEEE) received the B.S. degree in physics-oriented electronics, M.S. degree in electrical engineering from Cadi Ayyad University, Marrakesh, Morocco, in 1998 and 2000, respectively, and the Ph.D. degree in control, signal, and image processing jointly from the University of Nice Sophia Antipolis, Nice, France, and the Cadi Ayyad University, Marrakesh, Morocco, in 2005. He was a Postdoctoral Researcher with I3S Laboratory; with LAAS, Toulouse, France; and with GIPSA-Lab, Grenoble, France. He has been an Associate Professor with the Université Grenoble Alpes, Grenoble, France, and a Researcher with GIPSA-Lab since 2009. His research interests include network systems analysis, distributed estimation, traffic prediction and estimation, nonlinear filtering, identification, and tensor analysis.
\end{IEEEbiography}

\end{document}